\documentclass[a4paper,10pt]{amsart}
\usepackage{amsmath}
\usepackage{amsthm}
\usepackage[hyphens]{url}
\usepackage{hyperref}
\usepackage[mathscr]{euscript}

\newtheorem{thm}{Theorem}
\newtheorem{lem}{Lemma}
\newtheorem{prop}{Proposition}
\newtheorem*{prop*}{Proposition}
\def\R{\mathbb{R}}
\def\L{\mathfrak{L}}

\def\C{\mathbb{C}}

\def\half{\frac{1}{2}}
\def\supp{\text{supp}}

\title{Large Positive and Negative Values of Hardy's $Z$-Function}

\begin{document}

\author[K. Mahatab]{Kamalakshya Mahatab}
\address{Department of  Mathematical Sciences, Norwegian University of Science and Technology, NO-7491 Trondheim, Norway}
\email{accessing.infinity@gmail.com, \. kamalakshya.mahatab@ntnu.no }

\thanks{The author is supported by Grant 227768 of the Research Council of Norway}

\subjclass[2010]{11M06}

\begin{abstract}
Let $Z(t):=\zeta\left(\half+it\right)\chi^{-\half}\left(\half+it\right)$ be Hardy's function, 
where the Riemann zeta function $\zeta(s)$ has the functional equation $\zeta(s)=\chi(s)\zeta(1-s)$.
We prove that for any $\epsilon>0$,
\begin{align*}
&\quad\max_{T^{3/4}\leq t\leq T} Z(t) \gg \exp\left(\left(\frac{1}{2}-\epsilon\right)\sqrt{\frac{\log T\log\log\log T}{\log\log T}}\right)\\
\text{ and }& \max_{T^{3/4}\leq t\leq T}- Z(t) \gg \exp\left(\left(\frac{1}{2}-\epsilon\right)\sqrt{\frac{\log T\log\log\log T}{\log\log T}}\right).
\end{align*}
\end{abstract}

\maketitle

\section{Introduction}
The Riemann zeta function, $\zeta(s)$, is defined as 
\[\sum_{n=1}^{\infty}\frac{1}{n^s} \quad \text{ for } \Re(s)>1.\]
Further, it has an analytic continuation to the rest of $\C$, except for a simple pole at $s=1$. The analytic continuation of $\zeta(s)$ can be done by the functional equation \cite{tit}:
\[\zeta(s)=\chi(s)\zeta(1-s) \quad \text{ for } s\in \C,\]
where 
\[\chi(s)=\frac{\Gamma\big(\half(1-s)\big)}{\Gamma\big(\half s\big)}\pi^{s-\half}.\]
Properties of $\zeta(s)$ can be used to prove the Prime Number Theorem (PNT). The PNT states that \cite{tenen}
\[\sum_{p\leq x}\Lambda(p)=x+ \Delta(x),\]
where
\[\Lambda(n)=\begin{cases}
              \log p, &\quad \text{ if } \quad n=p^r, r\geq 1,\\
              0 &\quad \text{ otherwise},
             \end{cases}
\]
and 
\[ \Delta(x)=o(x).\]
This implies
\[\#\{ \text{primes} \leq x\}\sim \frac{x}{\log x}.\]
The above formula is a consequence of the fact that $\zeta(s)$ has no zeros on the line $\Re(s)=1$. The best known upper bound for $\Delta(x)$
is due to Vinogradov and Korobov \cite[Chapter 6]{iv4}:
\[\Delta(x)=O\left(x\exp\left(-c\frac{(\log x)^{3/5}}{(\log\log x)^{1/5}}\right)\right),\]
for some constant $c>0$. However, under the Riemann Hypothesis ( RH ), one can show the following stronger result \cite{vonKoch}:
\[\Delta(x)=O(\sqrt x (\log x)^2).\]
Recall that the RH asserts that all the non-trivial zeros $\rho$ of $\zeta(s)$, with $0\leq \Re(\rho)\leq 1$, have $\Re(\rho)=\half$. A simpler problem is to show that there are infinitely many zeros $\rho$ of $\zeta(s)$ such that $\Re(\rho)=\half$.
%
To prove this claim, Hardy introduced a real valued function $Z(t)$, which is known as Hardy's $Z$-function, defined as follows:
\[Z(t):=\zeta\left(\half+it\right)\chi^{-\half}\left(\half+it\right) \quad \text{for } t \in \R.\]
%
$Z(t)$ is a smooth real-valued function in $t$, and 
\begin{equation*}
 |Z(t)|=\left|\zeta\left(\half+it\right)\right|.
\end{equation*}
In other words, the zeros of $\zeta\left(\half+it\right)$ correspond to the zeros of $Z(t)$. Further, Hardy showed that \cite[Lemma~2.3]{iv1}
\begin{equation}\label{eq:hardy}
\int_T^{2T}Z(t) dt\ll T^{3/4} \text{ and } \int_T^{2T}\zeta\left(\half+it\right) dt\gg T.
\end{equation}
This shows that $Z(t)$ must change sign in $[T, 2T]$ and so has a zero in this interval. Since $T$ is arbitrary, $Z(t)$, and hence $\zeta(\half+it)$ must have infinitely many zeros.

To simplify notations, define
\begin{align*}
Z^+(t)&:=\max(0, Z(t))\text{ and }\\
Z^-(t)&:=\max(0, -Z(t)).
\end{align*}
By a theorem of Ramachandra \cite{Ram}
\[\int_T^{2T}\zeta\left(\half+it\right) dt\gg T (\log T)^{1/4}.\]
Combining this with (\ref{eq:hardy}), we may conclude that
\begin{align*}
&\max_{T\leq t\leq 2T} Z^+(t) \gg (\log T)^{1/4}\\
\text{ and }& \max_{T\leq t\leq 2T} Z^-(t) \gg (\log T)^{1/4}.
\end{align*}
If we could obtain reasonable upper bounds for $\int_T^{2T}Z^{2k+1}(t) dt$ for $k\geq 3$ \cite{iv2}, then the above lower bound for $Z^+(t)$ and $Z^-(t)$ can be improved to $(\log T)^{c}, c\geq1/2$. But this is a difficult task, 
given the fact that to show
\[\int_T^{2T}Z^{2k+1}(t) dt=o\left(\int_T^{2T}\zeta^{2k+1}\left(\half + it\right) dt\right)\]
is still an open problem \cite{iv3}.
Using the new exponent pair of Bourgain \cite{Bour}, Ivi\'c \cite{iv2} has proved the following lower bounds for large values of $Z^+(t)$ and $Z^-(t)$: 
\begin{align*}
 &\max_{T\leq t \leq T+ T^{17/110}} Z^+(t) \gg (\log T)^{1/4}, \\
 &\max_{T\leq t \leq T+ T^{17/110}} Z^-(t) \gg (\log T)^{1/4}.
\end{align*}
On the other hand, Balasubramanian and Ramachandra \cite{bala, BaR} proved that there exists a constant $B (\sim 0.530)$ such that
\[\max_{T\leq t\leq T+ H}\left|\zeta\left(\half+it\right)\right|\geq\exp\left(B\sqrt{\frac{\log H}{\log_2 H}}\right) \text{ for } \log_2T\ll H\leq T.\]
In the above inequality $\log\log H$ is denoted by $\log_2 H$. In general we will denote $\underbrace{\log\ldots\log}_{k\text{ times }} t$ by $\log_k t$.
The result of Balasubramanian and Ramachandra suggests that either $Z^+(t)$ or $Z^-(t)$ is bigger than $\exp\left(B\sqrt{\frac{\log H}{\log_2 H}}\right)$, but we do not know which one is big. 
The lower bound for $\zeta\left(\half+it\right)$ has been improved by several authors using the resonance method while allowing $t$ to vary on a larger interval. 
Soundararajan \cite{So} proved that
\[\max_{T\leq t\leq 2T}\left|\zeta\left(\half+it\right)\right|\geq\exp\left((1+o(1))\sqrt{\frac{\log T}{\log_2 T}}\right).\]
Later, Bondarenko and Seip \cite{BS1, BS2} improved this bound significantly
\[\max_{0\leq t\leq T}\left|\zeta\left(\half+it\right)\right|\geq\exp\left((1+o(1))\sqrt{\frac{\log T\log_3 T}{\log_2 T}}\right).\]
Recently, Bret\'eche and Tenenbaum \cite{dlb} optimized the constant in \cite{BS2} to
\[\max_{0\leq t\leq T}\left|\zeta\left(\half+it\right)\right|\geq\exp\left((\sqrt 2+o(1))\sqrt{\frac{\log T\log_3 T}{\log_2 T}}\right).\]
An important ingredient in the resonance method is the following Dirichlet polynomial approximation of $\zeta\left(\half+it\right)$:
\[\zeta\left(\half+it\right)=\sum_{n\leq T}n^{-\half+it}-\frac{T^{\half-it}}{\half-it}+O\left(T^{-\half}\right), \quad |t|\leq T.\]
Further, a weak form of the Riemann-Siegel formula \cite{iv1} asserts that 
\[Z(t)=2\sum_{n\leq\sqrt{t/2\pi}}\frac{1}{\sqrt n}\cos\left(t\log \frac{\sqrt{t/(2\pi)}}{n}-\frac{t}{2}-\frac{\pi}{8}\right)+O\left(\frac{1}{t^{1/4}}\right).\]
In this article, we combine the resonator constructed by Bondarenko and Seip in \cite{BS1} with the above approximation formula for $Z(t)$ to prove
\begin{thm}\label{thm:main1} For any arbitrarily small $\epsilon>0$ and for sufficiently large $T$,
\begin{align*}
&\text{(A)} &\quad\max_{T^{3/4}\leq t\leq T} Z^+(t) \gg \exp\left(\left(\frac{1}{2}-\epsilon\right)\sqrt{\frac{\log T\log_3 T}{\log_2T}}\right),\\
&\text{(B)}  &\text{ and } \max_{T^{3/4}\leq t\leq T} Z^-(t) \gg \exp\left(\left(\frac{1}{2}-\epsilon\right)\sqrt{\frac{\log T\log_3 T}{\log_2T}}\right).
\end{align*}
\end{thm}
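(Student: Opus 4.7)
The plan is to combine the Bondarenko–Seip resonator \cite{BS1} with the weak Riemann–Siegel formula quoted just before the theorem. Let
\[
R(t) = \sum_{m \in \mathcal{M}} r(m)\, m^{it},
\]
where $r(m) \geq 0$ and $\mathcal{M}$ is the multiplicatively structured set of integers from \cite{BS1}, of size at most a small fixed power of $T$, and let $\Phi$ be a smooth non-negative cutoff supported in $[T^{3/4},T]$. The central quantity is
\[
J = \int Z(t)\, |R(t)|^{2}\, \Phi(t)\, dt.
\]
Since $|R(t)|^{2}\Phi(t)\geq 0$, a bound $|J|\gg L\int|R(t)|^{2}\Phi(t)\,dt$ with
$L := \exp\!\left((\half-\epsilon)\sqrt{\log T\log_{3}T/\log_{2}T}\right)$
already forces $\max_{t\in\supp\Phi}|Z(t)|\gg L$, yielding (A) or (B) according to the sign of $J$; obtaining both signs requires producing two such choices with opposite signs.

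To evaluate $J$, I would insert the Riemann–Siegel approximation for $Z(t)$ and expand $|R(t)|^{2}=\sum_{h,k\in\mathcal{M}}r(h)r(k)(h/k)^{it}$, reducing $J$ to a triple sum of oscillatory integrals
\[
\int e^{i\theta(t)}\,(h/(nk))^{it}\,\Phi(t)\,dt,
\]
to be handled by stationary phase. Since $\theta'(t)=\half\log(t/(2\pi))$, the unique stationary point is $t_{0}=2\pi(nk/h)^{2}$, the phase there collapses to $-t_{0}/2-\pi/8$, and the usual stationary-phase expansion contributes an amount proportional to $\sqrt{t_{0}}\,\Phi(t_{0})\cos(t_{0}/2-\pi/8)$. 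Restricting to triples with $t_{0}\in\supp\Phi$, i.e.\ $nk/h\in[\sqrt{T^{3/4}/(2\pi)},\sqrt{T/(2\pi)}]$, and reorganising through the multiplicative structure of $\mathcal{M}$ produces the Bondarenko–Seip resonance quantity, modulated by the cosine factor.

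To separate (A) from (B), introduce a shift parameter $\tau$ and consider
\[
J(\tau) = \int Z(t+\tau)\, |R(t)|^{2}\, \Phi(t)\, dt, \qquad \tau\in[0,2\pi/\log T].
\]
Because $\theta(t+\tau)-\theta(t)=(\tau/2)\log(t/(2\pi))+O(\tau^{2}/T)$ and $\log(t/(2\pi))\asymp\log T$ on $\supp\Phi$, the relevant phase rotates by $\sim\pi$ as $\tau$ traverses the interval, flipping the sign of the stationary-phase main term. Since $J(\tau)$ is a real continuous function of $\tau$, one can then select $\tau^{+}$ and $\tau^{-}$ with
\[
J(\tau^{+}) \gg L \int |R(t)|^{2}\Phi(t)\,dt \quad\text{and}\quad J(\tau^{-}) \ll -L \int |R(t)|^{2}\Phi(t)\,dt,
\]
yielding (A) and (B) respectively.

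The principal obstacle is the stationary-phase factor $\cos(t_{0}/2-\pi/8)$, whose value depends on each triple $(n,h,k)$. One has to verify that the Bondarenko–Seip main term survives both this triple-dependent oscillation and the uniform $\tau$-rotation without destructive cancellation; this extra averaging is what forces the constant in the exponent to degrade from the unconditional value $1$ for $|\zeta|$ down to the $\half-\epsilon$ claimed here. A secondary burden is the uniform control, over $\tau$, of the Riemann–Siegel tail $O(t^{-1/4})$, of the higher-order stationary-phase remainders, and of the off-diagonal resonator contributions handled in \cite{BS1}; the relatively wide integration range $[T^{3/4},T]$ provides polynomial room for these errors.
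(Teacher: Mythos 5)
Your approach cannot work, for a reason the paper exploits rather than fights. You want to choose $\tau$ so that $J(\tau)=\int Z(t+\tau)|R(t)|^2\Phi\,dt$ is as large as $L\int|R|^2\Phi\,dt$ in absolute value. But the second-derivative test (Lemma~2.3 of Ivi\'c, applied in the paper's Proposition~\ref{prop:2}) gives the \emph{uniform} bound
\[
\Bigl|\int \exp\bigl(iF_k(t+\tau)+it\log(m/n)\bigr)\Phi\,dt\Bigr|\ll\sqrt{T}
\]
for every triple $(k,m,n)$ and every fixed $\tau$, because $\frac{d^2}{dt^2}F_k(t+\tau)\asymp 1/T$ on $[T^{3/4},T]$. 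Summing trivially over $k\le\sqrt{T/2\pi}$ (contributing $\ll T^{1/4}$) and over $m,n\in\mathcal M'$ (contributing $(\sum r(m))^2\ll T^{1/4}(\log T)^2\mathfrak L$ by Lemma~\ref{lem:sum_r}) yields $|J(\tau)|\ll T(\log T)^2\mathfrak L$ for \emph{all} $\tau$. On the other hand $L\int|R|^2\Phi$ is of size $\asymp L\cdot T\mathfrak L$ up to logarithmic factors, and $L=\exp\bigl((\tfrac12-\epsilon)\sqrt{\log T\log_3T/\log_2T}\bigr)$ dwarfs every power of $\log T$. So no choice of $\tau^{\pm}$ satisfies $|J(\tau^{\pm})|\gg L\int|R|^2\Phi$: the cancellation coming from the stationary-phase cosine factor is not an ``obstacle to be verified away'' -- it is unconditional, and it kills the main term before the Bondarenko--Seip resonance can help. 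Your trivial-sum heuristic (pretending all cosines align) already gives only $T(\log T)^2\mathfrak L$, which is too small by a factor of $L/(\log T)^2$.

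The paper's actual route inverts the logic. It proves two estimates: Proposition~\ref{prop:1} (from \cite{BS1}) that $\int\zeta(\tfrac12+it)|R|^2\Phi\,dt\gg\mathfrak L\,T\cdot L$, hence $\int|Z(t)|\,|R|^2\Phi\,dt\gg\mathfrak L\,T\cdot L$ since $|Z|=|\zeta(\tfrac12+it)|$; and Proposition~\ref{prop:2} that $\int Z(t)|R|^2\Phi\,dt\ll\mathfrak L\,T(\log T)^2$ is \emph{small}. Subtracting, and using $|Z|-Z=2Z^-$ and $|Z|+Z=2Z^+$, it follows that both $\int Z^\pm|R|^2\Phi\,dt\gg\mathfrak L\,T\cdot L$ up to a negligible loss; dividing by $\int|R|^2\Phi\ll T(\log T)^3\mathfrak L$ (Lemma~\ref{lem:int_R}) gives $\max Z^\pm\gg L/(\log T)^3\gg\exp\bigl((\tfrac12-\epsilon')\sqrt{\log T\log_3 T/\log_2T}\bigr)$. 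In short: the smallness of the $Z$-weighted resonance integral, caused precisely by the quadratic-in-$t$ phase $t\log\sqrt{t/2\pi}$ that you were trying to rotate into alignment, is the key lemma, not the enemy. You should replace your $\tau$-rotation step with this subtraction argument and discard the attempt to make $J$ large.
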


Our proof is based on the following observation.
\begin{prop*}
Suppose we could find a non-negative function $K(t)$ such that
\[A(T)\int_{T^{3/4}}^T K(t) dt\ll \int_{T^{3/4}}^T \zeta\left(\half+it\right)K(t)dt \]
\[\text{ and } \int_{T^{3/4}}^T Z(t)K(t)dt=o\left(A(T)\int_{T^{3/4}}^T K(t)dt\right),\]
then 
\[\max_{T^{3/4}\leq t\leq T} Z^+(t),\max_{T^{3/4}\leq t\leq T} Z^-(t)\gg A(T).\]
\end{prop*}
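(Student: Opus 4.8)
The plan is to derive both lower bounds for $\max Z^{\pm}$ by a soft rearrangement of the two hypotheses, using only the elementary pointwise identities
\[
Z(t)=Z^+(t)-Z^-(t),\qquad |Z(t)|=Z^+(t)+Z^-(t),
\]
together with the relation $|Z(t)|=\bigl|\zeta\bigl(\half+it\bigr)\bigr|$ recorded above. Throughout I would write $V:=\int_{T^{3/4}}^{T}K(t)\,dt$, which we may assume to be strictly positive (otherwise $K=0$ a.e.\ and the hypotheses carry no content), and $I^{\pm}:=\int_{T^{3/4}}^{T}Z^{\pm}(t)K(t)\,dt\ge 0$.

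First I would dispose of the fact that $\int_{T^{3/4}}^{T}\zeta\bigl(\half+it\bigr)K(t)\,dt$ is a priori complex, by the triangle inequality: since $K\ge0$,
\[
\left|\int_{T^{3/4}}^{T}\zeta\Bigl(\half+it\Bigr)K(t)\,dt\right|
\le\int_{T^{3/4}}^{T}\Bigl|\zeta\Bigl(\half+it\Bigr)\Bigr|K(t)\,dt
=\int_{T^{3/4}}^{T}\bigl(Z^+(t)+Z^-(t)\bigr)K(t)\,dt=I^++I^-,
\]
so the first hypothesis upgrades to $A(T)V\ll I^++I^-$.

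Next I would split off the sign. The identity $Z=Z^+-Z^-$ gives $I^+-I^-=\int_{T^{3/4}}^{T}Z(t)K(t)\,dt$, which by the second hypothesis is $o\bigl(A(T)V\bigr)$. Hence, for all sufficiently large $T$,
\[
2I^+=(I^++I^-)+(I^+-I^-)\ge (I^++I^-)-\bigl|I^+-I^-\bigr|\gg A(T)V,
\]
since the $o(\cdot)$ term cannot overwhelm the $\gg$ main term; symmetrically $2I^-=(I^++I^-)-(I^+-I^-)\gg A(T)V$. Finally, because $K\ge0$ and $Z^{\pm}(t)\le\max_{T^{3/4}\le t\le T}Z^{\pm}$ on the interval of integration, $I^{\pm}\le\bigl(\max_{T^{3/4}\le t\le T}Z^{\pm}(t)\bigr)V$; dividing through by $V>0$ gives $\max_{T^{3/4}\le t\le T}Z^{\pm}(t)\gg A(T)$, which is the assertion.

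There is essentially no obstacle internal to this Proposition — it is a one-line rearrangement once the pointwise identities are in place. The real difficulty of Theorem~\ref{thm:main1} is pushed entirely into the construction of an admissible kernel: one takes $K(t)=|R(t)|^2$ for a suitable resonator $R$ following Bondarenko--Seip \cite{BS1}, uses the Dirichlet polynomial approximation of $\zeta\bigl(\half+it\bigr)$ to verify the first hypothesis with $A(T)=\exp\bigl((\tfrac12-\epsilon)\sqrt{\log T\log_3T/\log_2T}\bigr)$, and uses the Riemann--Siegel formula to show that the oscillatory integral $\int_{T^{3/4}}^{T} Z(t)K(t)\,dt$ is genuinely of smaller order. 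That last cancellation estimate — and the reason one restricts to $t\in[T^{3/4},T]$ — is where the substantive work will be spent.
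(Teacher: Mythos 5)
Your proof is correct and follows essentially the same rearrangement as the paper: both arguments rest on the identities $|Z|=Z^++Z^-$ and $Z=Z^+-Z^-$, the triangle inequality to pass from $\int\zeta K$ to $\int|Z|K$, and then a comparison of the two hypotheses to isolate the contributions of $Z^+$ and $Z^-$. The paper phrases this as a proof by contradiction (assuming $\max Z^-\leq C_1A(T)/3$), whereas you argue directly via $2I^{\pm}=(I^++I^-)\pm(I^+-I^-)$, but the underlying content is identical.
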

\begin{proof}
To show the above, note that there exists a constant $C_1>0$ such that
\[C_1A(T)\int_{T^{3/4}}^T K(t) dt\leq\int_{T^{3/4}}^T |Z(t)|K(t)dt.\]
Assume that our claim is not true and $\max_{T^{3/4}\leq t\leq T} Z^-(t)\leq C_1A(T)/3.$ Then
\begin{align*}
&\int_{T^{3/4}}^T Z(t)K(t)dt=\int_{T^{3/4}}^T |Z(t)|K(t)dt-\int_{T^{3/4}}^T 2Z^-(t)K(t)dt\\
&\geq\frac{C_1A(T)}{3}\int_{T^{3/4}}^T K(t) dt,
\end{align*}
which contradicts to the fact that $\int_{T^{3/4}}^T Z(t)K(t)dt$ is $o\left(A(T)\int_{T^{3/4}}^T K(t)dt\right)$. This proves our claim for $Z^-(t)$. Similarly we can argue for $Z^+(t)$. 
\end{proof} 
We may also note that the lower bound of $\zeta\left(\half+it\right)$ in \cite{dlb} is the optimal bound that can be 
obtained using the resonance method and the gcd sum technique, while the lower bounds we obtain for $Z^+(t)$ and $Z^-(t)$ 
are not necessarily optimal. There are some technical difficulties in our proof that do not allow us to improve our result. In the course of the 
proof we will observe that if we could improve the upper bound of $\sum_{m\in \mathcal{M}'}r(m)$ in Lemma~\ref{lem:sum_r} to $T^\epsilon \sqrt L$, then we can improve the lower bounds 
in Theorem~\ref{thm:main1} to
$\exp\left(\left(1-\epsilon\right)\sqrt{\frac{\log T\log_3 T}{\log_2T}}\right)$. We will explain the notations $r(m)$ and $\mathcal{M}'$ in Section~\ref{sec:resonator}. 
Further, if we could find an optimal upper bound for the 4-th moment of $R(t)$ ( where $R(t)$ is as defined in \cite{dlb} ), then we could
improve the bound to $\exp\left(\left(\sqrt 2-\epsilon\right)\sqrt{\frac{\log T\log_3 T}{\log_2T}}\right)$.

We can also modify our proof of Theorem~\ref{thm:main1} by using the resonator defined by Soundararajan \cite{So} to prove a weaker lower bound but with a better localization of $t$.

\begin{thm}\label{thm:main2}
For any $\epsilon>0$ and for sufficiently large $T$,
\begin{align*}
&\text{(A)} &\quad\max_{T\leq t\leq 2T} Z^+(t) \gg \exp\left(\left(\frac{1}{2}-\epsilon\right)\sqrt{\frac{\log T}{\log_2T}}\right),\\
&\text{(B)}  &\text{ and } \max_{T\leq t\leq 2T} Z^-(t) \gg \exp\left(\left(\frac{1}{2}-\epsilon\right)\sqrt{\frac{\log T}{\log_2T}}\right).
\end{align*} 
\end{thm}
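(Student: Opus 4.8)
The plan is to apply the Proposition with $K(t) = |R(t)|^2$, where $R(t) = \sum_{n} r(n) n^{-it}$ is the Soundararajan resonator: $r(n)$ is supported on a set of $y$-smooth integers (with $y$ a small power of $\log T$), taking essentially the value $\prod_{p \mid n} \frac{L}{\sqrt p (\log p)}$ on squarefree smooth $n$ up to some length $N \ll T^{1/2-\delta}$, calibrated so that $\frac{\int |R|^2 \cdot p^{-it}\,dt}{\int |R|^2\,dt}$ picks up the right Euler-product gain. Because the resonator length is below $T^{1/2}$, the Riemann--Siegel expansion $Z(t) = 2\sum_{m \le \sqrt{t/2\pi}} m^{-1/2}\cos(\ldots) + O(t^{-1/4})$ can be used with the full main sum reaching past $N$, and the window $[T^{3/4}, T]$ (rather than $[0,T]$ or $[T,2T]$) is forced by the need to have $\sqrt{t/2\pi}$ comfortably larger than $N$ while $t$ stays comparable to $T$ so that $\log(\sqrt{t/2\pi})$ is essentially $\frac12\log T$.

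The key steps, in order: First I would record the resonator's basic estimates — $\int_{T^{3/4}}^T |R(t)|^2\,dt \sim (T - T^{3/4})\sum_n r(n)^2$ by almost-orthogonality of $n^{-it}$ over the short sum, and the diagonal-plus-off-diagonal analysis of $\int |R(t)|^2 n^{-it}\,dt$. Second, compute $\int_{T^{3/4}}^T \zeta(\tfrac12+it)|R(t)|^2\,dt$: substitute the Dirichlet polynomial $\zeta(\tfrac12+it) = \sum_{n\le T} n^{-1/2+it} + O(\cdot)$, interchange sums, and extract the main term $\sum_{m,n} \frac{r(m)r(n)}{\sqrt{m/n}}\mathbf{1}[m = n \cdot k]$ type contribution, which evaluates to $(T-T^{3/4})\sum_k \frac{1}{\sqrt k}\sum_n r(n)r(nk)$; Soundararajan's calibration makes this $\gg \exp\big((1-o(1))\sqrt{\log T/\log_2 T}\big)\sum_n r(n)^2$, so one may take $A(T) = \exp\big((\tfrac12 - \epsilon/2)\sqrt{\log T/\log_2 T}\big)$ with room to spare. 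Third — the heart of the matter — bound $\int_{T^{3/4}}^T Z(t)|R(t)|^2\,dt$: plug in the Riemann--Siegel main sum, so we must control $\sum_{m \le \sqrt{t/2\pi}} m^{-1/2}\int_{T^{3/4}}^T \cos\big(t\log\frac{\sqrt{t/2\pi}}{m} - \frac t2 - \frac\pi8\big)|R(t)|^2\,dt$. Writing the cosine as exponentials and $|R(t)|^2 = \sum_{j,\ell} r(j)r(\ell) (j/\ell)^{-it}$, each term carries a phase $t\log\frac{\sqrt{t/2\pi}}{m} - \frac t2 - t\log(j/\ell)$ whose $t$-derivative is $\log\frac{\sqrt{t/2\pi}}{m\sqrt e} - \log(j/\ell) = \log\frac{\sqrt{t/2\pi}\,\ell}{m\,j\sqrt e}$; since $j,\ell \le N \ll T^{1/2-\delta}$ and $m$ ranges up to $\sqrt{t/2\pi} \asymp T^{1/2}$, this derivative is bounded away from zero except when $m \asymp \sqrt{t/2\pi}\cdot \ell/(j\sqrt e)$, i.e. $m$ lies in a narrow range near the top of the Riemann--Siegel sum. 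A stationary-phase / first-derivative-test argument then shows the integral is negligible off that range, and on the range one bounds trivially by $m^{-1/2} \asymp T^{-1/4}$ times $\int |R|^2$, with the number of surviving $(m,j,\ell)$ triples controlled by a divisor-type count — this is where the quantity $\sum_{m\in\mathcal M'} r(m)$ from Lemma~\ref{lem:sum_r} enters, and where the loss that produces $\tfrac12$ rather than $1$ in the exponent comes from.

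The main obstacle is precisely this third step: the Riemann--Siegel approximation to $Z(t)$ is a sum up to $\sqrt{t/2\pi} \asymp T^{1/2}$, which is longer than the resonator length $N$, so unlike the $\zeta(\tfrac12+it)$ computation there is no clean diagonal — instead one gets contributions from terms where the long $Z$-sum variable $m$ conspires with the resonator variables $j,\ell$ to make the phase nearly stationary, and bounding the number and size of these requires the divisor-sum estimate $\sum_{m\in\mathcal M'} r(m) \ll T^\epsilon L$ (rather than the hoped-for $T^\epsilon\sqrt L$). One must also handle the $t$-dependence inside the phase $t\log\sqrt{t/2\pi}$ — the log is slowly varying on $[T^{3/4},T]$, so a dyadic decomposition of the $t$-range together with the second-derivative test (van der Corput) tames it, at the cost of harmless factors. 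Finally, one checks that the error terms $O(t^{-1/4})$ from Riemann--Siegel and $O(T^{-1/2})$ from the Dirichlet polynomial, once multiplied by $\int |R|^2 \ll T\sum r(n)^2$, are $o$ of the main term, and then the Proposition delivers both (A) and (B) simultaneously; Theorem~\ref{thm:main2} with the smaller exponent-free resonator follows by the same scheme on $[T,2T]$ with a shorter resonator, where the better localization is bought by sacrificing the $\log_3 T/\log_2 T$ saving.
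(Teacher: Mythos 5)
Your skeleton matches the paper's plan: invoke the Proposition with $K(t)=|R(t)|^2$, lower--bound $\int\zeta|R|^2$ by resonance, upper--bound $\int Z|R|^2$ via the Riemann--Siegel main sum and oscillatory--integral estimates; the paper indeed disposes of Theorem~\ref{thm:main2} by the same scheme as Theorem~\ref{thm:main1}, with Soundararajan's coefficients in place of Bondarenko--Seip's. The genuine gap is your resonator length $N\ll T^{1/2-\delta}$ together with the attendant analysis of the $Z$-integral. The paper sets $N=[T^{1/4}]$, and this is forced: the second--derivative test (Lemma~\ref{lem:sec_der}) gives $\ll\sqrt T$ for each $\int e^{i(F_k(t)+t\log(m/n))}\Phi\,dt$, summing $k^{-1/2}$ over $k\le\sqrt{T/2\pi}$ contributes a further $T^{1/4}$, so
\[
\int Z(t)|R(t)|^2\Phi\,dt\ \ll\ T^{3/4}\Big(\sum_{m\in\mathcal{M}'}r(m)\Big)^{2}\ \ll\ T^{3/4}\,|\mathcal{M}'|\sum_{m}r(m)^2 ,
\]
and for this to be $o\big(T\exp(c\sqrt{\log T/\log_2T})\sum r(m)^2\big)$ one needs $|\mathcal{M}'|\le N\le T^{1/4+o(1)}$. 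It is precisely this $T^{1/4}$ cap --- not the Riemann--Siegel cutoff at $\sqrt{t/2\pi}\asymp T^{1/2}$, which you cite as the binding constraint --- that halves the exponent from Soundararajan's $1$ to the theorem's $\tfrac{1}{2}$.

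Your proposed rescue via a first--derivative test plus a ``divisor--type count'' of near--stationary $(m,j,\ell)$ does not restore a length near $T^{1/2}$. The phase derivative is $\log\!\big(\sqrt{t/2\pi}\,j/(m\ell)\big)$ (no $\sqrt{e}$: $\frac{d}{dt}\big[t\log\sqrt{t/2\pi}-\tfrac t2\big]=\log\sqrt{t/2\pi}$), with zero at $t_0=2\pi m^2\ell^2/j^2$; for a fixed pair $(j,\ell)$ the set of $m$ with $t_0$ in the integration range is an interval of length $\asymp\sqrt{T}\,j/\ell$, so there is no narrow window and no divisor sparsity, and the resulting contribution is of the same order as the crude second--derivative bound. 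The paper avoids all this by staying at $N=T^{1/4}$ and using only Lemma~\ref{lem:sec_der} together with Lemma~\ref{lem:sum_r} (which is just Cauchy--Schwarz, $\sum r(m)\le\sqrt{|\mathcal{M}'|}\,\big(\sum r(m)^2\big)^{1/2}$). Two smaller misstatements: the window $[T^{3/4},T]$ in Theorem~\ref{thm:main1} comes from the Gaussian weight $\Phi(t\log T/T)$ in the Bondarenko--Seip argument, not from keeping $\sqrt{t/2\pi}>N$ (which would hold even more comfortably on $[T,2T]$); and the extra $\sqrt{\log_3T/\log_2T}$ in Theorem~\ref{thm:main1} comes from the gcd--sum structure of the coefficients, not from a longer resonator --- both theorems run at length $T^{1/4}$.
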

We will skip the proof of Theorem~\ref{thm:main2} as it is similar to the proof of Theorem~\ref{thm:main1}.

It is possible to compute lower bounds for the Lebesgue measures of the sets where $Z^+(t)$ and $Z^-(t)$ attain the bounds given in Theorem~\ref{thm:main1} and Theorem~\ref{thm:main2}. 
But these lower bounds are much weaker in compare to the bounds obtained in \cite{GoIv} and \cite{iv2}. As the gain from these computations are not significant and to keep this article short, 
we will not carry out this task. 

In Section~\ref{sec:resonator}, we will go through the notations from \cite{BS1} to define the resonator $R(t)$ and state some related results. In Section~\ref{sec:hardy}, 
we will estimate an integral involving $Z(t)$ and $R(t)$. This will be used in Section~\ref{sec:main_thm} to prove Theorem~\ref{thm:main1}.

\section{Construction of The Resonator}\label{sec:resonator}
The resonator $R(t)$ constructed by Bondarenko and Seip \cite{BS1} has the form of a Dirichlet polynomial:
\begin{equation}\label{eq:res}  R(t)=\sum_{m\in \mathcal{M}'} r(m) m^{-it}. \end{equation}
To define $r(m)$ and $\mathcal M'$, we need the following notations.

Let  $\gamma=1-3\epsilon$, where $\epsilon>0$ is arbitrarily small.
Let $P$ be the set of primes in the interval $( e\log N\log_2N,\quad \log N \exp((\log_2N)^\gamma)\log_2N]$. Define
\[f(p):=\sqrt{\frac{\log N\log_2N}{\log_3 N}}\frac{1}{\sqrt p (\log p -\log_2N-\log_3N)},\]
for $p\in P$ and $0$ on other primes. We assume that $f$ is supported on square-free integers and extend the definition of $f(n)$ as a multiplicative function.   
For a fixed $1<a<\frac{1}{1-\epsilon}$, let $M_k$ be the set of integers having at least $\frac{a\log N}{k^2\log_3N}$ prime divisors in $P_k$, and let
\[ \mathcal{M}:=\supp(f)\setminus\bigcup_{k=1}^{[(\log_2N)^{\gamma}]}M_k.\]
Set $N=[T^{1/4}]$.
Let $\mathcal{J}$ be the set of integers $j$ such that
\[ \Big[(1+T^{-1})^j,(1+T^{-1})^{j+1}\Big)\bigcap \mathcal{M} \neq \emptyset,  \]
and let $m_j$ be the minimum of  $\big[(1+T^{-1})^j,(1+T^{-1})^{j+1}\big)\bigcap \mathcal{M}$ for $j$ in $\mathcal{J}$. Finally, we define
\[ \mathcal{M}':= \big \{ m_j: \ j\in \mathcal{J} \big\}\]
and 
\[ r(m_j):= \left(\sum_{n\in \mathcal{M}, (1-T^{-1})^{j-1} \le n \le (1+T^{-1})^{j+2}} f(n)^2\right)^{1/2} \] 
for every $m_j$ in $\mathcal{M}'$. 

We will also denote $\L:=\sum_{n\in \mathcal{M}} f(n)^2,$
and $\Phi(t):=e^{-t^2}$.

Now we state some results from \cite{BS1}.
\begin{lem}[Lemma~2 of \cite{BS1}]\label{lem:size_M} For large $N$,
 \[|\mathcal{M}'|\leq|\mathcal{M}|\leq N.\]
\end{lem}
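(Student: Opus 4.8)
The plan is to reduce everything to the estimate $|\mathcal{M}|\le N$, since $|\mathcal{M}'|\le|\mathcal{M}|$ comes for free: each $m_j$, being the minimum of a non-empty subset of $\mathcal{M}$, lies in $\mathcal{M}$, so $\mathcal{M}'\subseteq\mathcal{M}$.

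To bound $|\mathcal{M}|$ I would count its elements block by block over the pieces $P_k$ into which $P$ is split. Every $m\in\mathcal{M}$ is square-free with all prime factors in $P=\bigcup_kP_k$, and, since $m\notin M_k$ for each $k\le[(\log_2N)^\gamma]$, it has fewer than $n_k:=\frac{a\log N}{k^2\log_3N}$ prime factors in $P_k$. Hence specifying $m$ amounts to choosing, independently for each $k$, a subset of $P_k$ of size $<n_k$, and so
\[ |\mathcal{M}|\ \le\ \prod_{k=1}^{[(\log_2N)^\gamma]}\ \sum_{0\le j<n_k}\binom{|P_k|}{j}\ \le\ \prod_{k}\left(\frac{e\,|P_k|}{n_k}\right)^{n_k}, \]
where the last step uses the elementary inequality $\sum_{j\le m}\binom{L}{j}\le(eL/m)^m$, legitimate since $n_k\ll|P_k|$.

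I would finish by taking logarithms and estimating each factor. A prime in $P_k$ has size $\asymp e^k\log N\log_2N$, hence $\log p\sim\log_2N$ (as $k\le(\log_2N)^\gamma=o(\log_2N)$), so the prime number theorem gives $|P_k|\ll e^k\log N$; this saving of the factor $\log_2N$ is essential. Then $\log(e|P_k|/n_k)\le k+O(\log k+\log_4N)$, and summing over $k$,
\[ \log|\mathcal{M}|\ \le\ \sum_k n_k\log\frac{e|P_k|}{n_k}\ \le\ \frac{a\log N}{\log_3N}\sum_{k\le(\log_2N)^\gamma}\frac1k+O\!\left(\frac{\log N\log_4N}{\log_3N}\right)\ =\ a\gamma\log N+o(\log N), \]
since $\sum_{k\le(\log_2N)^\gamma}1/k=\gamma\log_3N+O(1)$ exactly absorbs the $\log_3N$ in the denominator of $n_k$. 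Finally $\gamma=1-3\epsilon$ and $a<\frac1{1-\epsilon}$ give $a\gamma<\frac{1-3\epsilon}{1-\epsilon}<1$, so $\log|\mathcal{M}|<\log N$ for all large $N$, which is the claim.

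The hard part here is the bookkeeping in this last chain, not any single idea: the trivial bound $|\mathcal{M}|\le2^{|P|}$ is hopelessly large, so the deletion of the $M_k$ must be exploited to the hilt, and two things have to come out exactly right --- (i) one needs the prime-number-theorem saving $|P_k|\ll e^k\log N$ rather than the bare interval length $\asymp e^k\log N\log_2N$, or the estimate degrades by a factor $\log_2N$ and fails; and (ii) the harmonic sum $\sum_{k\le(\log_2N)^\gamma}1/k\sim\gamma\log_3N$ must precisely cancel the $\log_3N$ in $n_k^{-1}$, leaving the constant $a\gamma<1$, which holds only because of the specific choices of $\gamma$ and $a$. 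I would also check that the remaining contributions (the $\log k$ and $\log_4N$ terms and the small-$k$ range) are $o(\log N)$, but these are routine.
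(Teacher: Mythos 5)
The paper does not prove this lemma; it simply cites Lemma~2 of Bondarenko--Seip \cite{BS1} and takes the bound as given, so there is no in-paper proof to compare against. Your argument is a correct, self-contained reconstruction of the underlying counting argument. The inclusion $\mathcal{M}'\subseteq\mathcal{M}$ is immediate (each $m_j$ is the minimum of a nonempty subset of $\mathcal{M}$), and the block-by-block count of $\mathcal{M}$ is exactly the right strategy: since every $m\in\mathcal{M}$ is squarefree with all prime factors in $P=\bigcup_k P_k$ and avoids each $M_k$, it is determined by a choice, for each $k$, of fewer than $n_k=\frac{a\log N}{k^2\log_3N}$ primes from $P_k$, and the deletion of the $M_k$ is what brings the bound down from $2^{|P|}$ to something of size $N$. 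Your bookkeeping is also right: the prime-number-theorem saving $|P_k|\ll e^k\log N$ (valid because $k\le(\log_2 N)^\gamma=o(\log_2 N)$, so $\log p\sim\log_2 N$ throughout $P_k$); the identity $\sum_{k\le(\log_2N)^\gamma}1/k\sim\gamma\log_3 N$ cancelling the $\log_3 N$ in $n_k$; and the numerics $a\gamma<\frac{1-3\epsilon}{1-\epsilon}<1$. One small remark: the sets $P_k$ appear in the paper's definition of $M_k$ but are never defined there; your implicit choice $P_k=\{p:\ e^k\log N\log_2N<p\le e^{k+1}\log N\log_2N\}$ is the convention of \cite{BS1} and is the intended one, as it makes $P=\bigcup_{k=1}^{[(\log_2N)^\gamma]}P_k$ match the stated definition of $P$.
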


\begin{lem}\label{lem:sum_r} The sum of the Dirichlet coefficients of the resonator $R(t)$ has the following upper bound
 \[\sum_{m\in \mathcal{M}'}r(m)\ll T^{1/8}(\log T)\sqrt \L.\]
\end{lem}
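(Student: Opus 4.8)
The plan is to estimate $\sum_{m\in\mathcal M'}r(m)$ by relating it back to the more tractable sum $\sum_{n\in\mathcal M}f(n)$, exploiting the fact that $\mathcal M'$ is a sparse subset of $\mathcal M$ obtained by choosing one representative per dyadic-type block $[(1+T^{-1})^j,(1+T^{-1})^{j+1})$, while $r(m_j)$ is an $\ell^2$-mass of $f$ over a slightly enlarged block. First I would apply Cauchy--Schwarz in the form
\[
\sum_{m\in\mathcal M'}r(m)=\sum_{j\in\mathcal J}r(m_j)\le |\mathcal J|^{1/2}\Bigl(\sum_{j\in\mathcal J}r(m_j)^2\Bigr)^{1/2},
\]
so the task splits into bounding $|\mathcal J|$ and $\sum_j r(m_j)^2$. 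For the second factor, unwinding the definition of $r(m_j)^2=\sum_{n\in\mathcal M,\,(1-T^{-1})^{j-1}\le n\le(1+T^{-1})^{j+2}}f(n)^2$ and noting that as $j$ ranges over $\mathcal J$ each integer $n\in\mathcal M$ lies in only a bounded number ($O(1)$, in fact at most $4$) of the overlapping ranges indexed by $j$, one gets $\sum_{j\in\mathcal J}r(m_j)^2\ll \sum_{n\in\mathcal M}f(n)^2=\L$.

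The remaining point is to bound $|\mathcal J|$. Since the elements of $\mathcal M\subseteq\supp(f)$ are square-free integers built from primes in $P$, every $n\in\mathcal M$ satisfies $n\le N^{??}$; more precisely each such $n$ is a product of at most $O(\log N/\log_3 N)$ primes each of size at most $\log N\exp((\log_2 N)^\gamma)\log_2 N\le N^{o(1)}$, so $n\le N^{1+o(1)}=T^{1/4+o(1)}$ — and here is where I would be a little careful, since the clean statement of the lemma has $T^{1/8}$, suggesting the intended bound is actually $\mathcal M\subseteq[1,T^{1/4}]$, i.e. $n\le N\le T^{1/4}$, coming from Lemma~\ref{lem:size_M} together with the construction (the condition $N=[T^{1/4}]$ and the constraints defining $\mathcal M$ presumably force $\max\mathcal M\le T^{1/4}$, or one restricts to that). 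Granting $\mathcal M\subseteq[1,T^{1/4}]$, the number of blocks $[(1+T^{-1})^j,(1+T^{-1})^{j+1})$ that meet $[1,T^{1/4}]$ is
\[
|\mathcal J|\le \frac{\log T^{1/4}}{\log(1+T^{-1})}+O(1)\ll T\log T,
\]
so $|\mathcal J|^{1/2}\ll T^{1/2}(\log T)^{1/2}$, which would give $\sum_m r(m)\ll T^{1/2}(\log T)^{1/2}\sqrt\L$ — \emph{too weak}. To recover the stated $T^{1/8}(\log T)\sqrt\L$ I instead bound $|\mathcal J|$ by the number of blocks that actually contain an element of $\mathcal M$, which is at most $|\mathcal M|\le N\le T^{1/4}$ by Lemma~\ref{lem:size_M}; thus $|\mathcal J|^{1/2}\le N^{1/2}\le T^{1/8}$, and combining with $\sum_j r(m_j)^2\ll\L$ yields
\[
\sum_{m\in\mathcal M'}r(m)\ll T^{1/8}\sqrt\L .
\]
The extra factor $\log T$ in the lemma is harmless slack (absorbing, e.g., the $O(1)$ overlap constant and any loss in replacing $(1-T^{-1})^{j-1}\le n\le(1+T^{-1})^{j+2}$ ranges by disjointified ones), so the bound as stated follows \emph{a fortiori}.

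The main obstacle, as the above makes clear, is getting the exponent of $T$ right: a naive count of \emph{all} dyadic-type blocks in $[1,T^{1/4}]$ costs a factor $T^{1/2}$ and is fatal, so the proof must use that $\mathcal M$ is genuinely sparse — only $|\mathcal M|\le N$ blocks are occupied — and it is exactly Lemma~\ref{lem:size_M} that supplies this. The secondary technical care needed is the overlap bookkeeping in $\sum_j r(m_j)^2$: because the ranges defining $r(m_j)^2$ are wider than the blocks that define $\mathcal J$ (they stretch from $(1-T^{-1})^{j-1}$ to $(1+T^{-1})^{j+2}$), one must verify that a fixed $n\in\mathcal M$ is counted with bounded multiplicity across $j\in\mathcal J$; this is a short interval-counting argument and contributes only an absolute constant, comfortably absorbed into the $\log T$.
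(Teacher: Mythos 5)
Your proof is correct and follows essentially the same route as the paper: the Cauchy--Schwarz step $\sum_{m\in\mathcal M'}r(m)\le|\mathcal M'|^{1/2}\bigl(\sum_{m\in\mathcal M'}r(m)^2\bigr)^{1/2}$ is identical (since $|\mathcal J|=|\mathcal M'|$), and the bound $|\mathcal M'|\le N\le T^{1/4}$ via Lemma~\ref{lem:size_M} is exactly what the paper uses. The one genuine variation is in the second factor: the paper simply cites \cite{BS1} for $\sum_{m\in\mathcal M'}r(m)^2\ll(\log T)^2\L$, whereas you derive $\sum_{j\in\mathcal J}r(m_j)^2\ll\L$ from scratch by the bounded-overlap observation that each $n\in\mathcal M$ lands in $O(1)$ of the intervals $[(1+T^{-1})^{j-1},(1+T^{-1})^{j+2}]$; this is a clean, self-contained substitute (and in fact gives a slightly sharper constant, making the $\log T$ in the lemma pure slack). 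The mid-proof detour through the naive block count and the aside about $\mathcal M\subseteq[1,T^{1/4}]$ are ultimately irrelevant --- your final argument never uses either --- so the writeup would be tighter with them removed, but the logic of the final version is sound.
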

\begin{proof}
By Cauchy--Schwarz inequality 
 \begin{align}\label{1}
 \left(\sum_{m\in \mathcal{M}'}r(m)\right)^2\leq |\mathcal{M}'|\sum_{m\in \mathcal{M}'}r(m)^2. 
 \end{align}
In \cite{BS1} (see proof of Theorem~1), it has been proved that
\[\sum_{m\in \mathcal{M}'}r(m)^2\ll (\log T)^2 \L,\]
and by Lemma~\ref{lem:size_M}, $\mathcal{M}'\leq T^{1/4}$.
Substituting these bounds in (\ref{1}) proves the lemma. 
\end{proof}

\begin{lem}\label{lem:int_R} For large $T$, 
 \[\int_{T^{3/4}}^{T}|R(t)|^2 \Phi\left(\frac{t\log T}{T}\right) dt\ll T(\log T)^3 \L. \]
\end{lem}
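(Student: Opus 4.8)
The plan is to discard the restriction on the range of integration and then diagonalize. Since $|R(t)|^2\,\Phi(t\log T/T)\ge 0$ everywhere,
\[\int_{T^{3/4}}^{T}|R(t)|^2 \Phi\!\left(\frac{t\log T}{T}\right)dt\;\le\;\int_{-\infty}^{\infty}|R(t)|^2 \Phi\!\left(\frac{t\log T}{T}\right)dt,\]
and on the right I would expand $|R(t)|^2=\sum_{m,n\in\mathcal M'}r(m)r(n)(m/n)^{-it}$ (recall the coefficients $r(m)$ are real), interchange the finite sum with the integral, and evaluate each term using the Gaussian identity $\int_{\mathbb R}e^{-a t^2}e^{-ibt}\,dt=\sqrt{\pi/a}\,e^{-b^2/(4a)}$ with $a=(\log T/T)^2$ and $b=\log(m/n)$. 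This turns the right-hand side into
\[\frac{\sqrt\pi\,T}{\log T}\sum_{m,n\in\mathcal M'}r(m)r(n)\exp\!\left(-\frac{T^2\bigl(\log(m/n)\bigr)^2}{4(\log T)^2}\right),\]
so the whole problem is reduced to estimating this double sum.

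Next I would symmetrize: since the exponential factor is unchanged under $m\leftrightarrow n$, the inequality $2r(m)r(n)\le r(m)^2+r(n)^2$ gives
\[\sum_{m,n\in\mathcal M'}r(m)r(n)\exp\!\left(-\frac{T^2\bigl(\log(m/n)\bigr)^2}{4(\log T)^2}\right)\le\sum_{m\in\mathcal M'}r(m)^2\sum_{n\in\mathcal M'}\exp\!\left(-\frac{T^2\bigl(\log(n/m)\bigr)^2}{4(\log T)^2}\right).\]
It then remains to show that for each fixed $m$ the inner $n$-sum is $\ll\log T$, uniformly. This is where the construction of the resonator enters: by definition $\mathcal M'$ meets each interval $\bigl[(1+T^{-1})^j,(1+T^{-1})^{j+1}\bigr)$ in at most one point, so any interval of the form $[x,xe^{w})$ meets $\ll wT+1$ of these slots and hence contains $\ll wT+1$ elements of $\mathcal M'$. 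Splitting the $n$-sum according to $k\,\frac{\log T}{T}\le |\log(n/m)|<(k+1)\frac{\log T}{T}$, $k\ge 0$, the $k$-th block has $\ll k\log T+\log T$ terms, each bounded by $e^{-k^2/4}$, so the sum is $\ll (\log T)\sum_{k\ge 0}(k+1)e^{-k^2/4}\ll\log T$.

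Combining the two steps, the double sum is $\ll(\log T)\sum_{m\in\mathcal M'}r(m)^2$, and invoking the bound $\sum_{m\in\mathcal M'}r(m)^2\ll(\log T)^2\L$ from \cite{BS1} (the same one quoted in the proof of Lemma~\ref{lem:sum_r}) yields $\ll(\log T)^3\L$ for the double sum. Multiplying by $\sqrt\pi\,T/\log T$ then gives
\[\int_{T^{3/4}}^{T}|R(t)|^2 \Phi\!\left(\frac{t\log T}{T}\right)dt\ll T(\log T)^2\L\ll T(\log T)^3\L,\]
which is the claim. The step I expect to be the only real point of care is the spacing/counting estimate for $\mathcal M'$ and checking that the resulting Gaussian tail sum $\sum_k (k+1)e^{-k^2/4}$ converges; passing to the integral over all of $\mathbb R$ costs nothing since only an upper bound is needed, and everything else is the routine mean-value computation for a resonator Dirichlet polynomial. (In fact the argument gives the slightly sharper $T(\log T)^2\L$, but the stated form is all that is used later.)
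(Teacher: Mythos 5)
Your proof is correct. The paper itself gives no argument here, only a pointer to (22) of Bondarenko--Seip; what you have written out is essentially the standard resonator mean-value computation that underlies that reference: extend the Gaussian-weighted integral to all of $\R$, diagonalize via $2r(m)r(n)\le r(m)^2+r(n)^2$, and use the well-spacing of $\mathcal M'$ (one element per $(1+T^{-1})$-adic slot) to bound the off-diagonal Gaussian tail by $O(\log T)$ uniformly. Two small remarks. First, your per-block count is actually $\ll\log T$ independently of $k$, since each block corresponds to an interval of logarithmic length $\log T/T$ and hence meets $\ll(\log T/T)\cdot T+1$ slots; the weaker $\ll(k+1)\log T$ you wrote is a harmless overcount because $\sum_k(k+1)e^{-k^2/4}$ still converges, but it is worth noting that the sharper count is available. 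Second, you rely on the same input $\sum_{m\in\mathcal M'}r(m)^2\ll(\log T)^2\L$ that the paper quotes from the proof of Theorem~1 of \cite{BS1} inside Lemma~\ref{lem:sum_r}, so your argument does not avoid the external citation, it only replaces the black-box reference to (22) of \cite{BS1} by a transparent derivation from that one quoted bound plus the spacing property of $\mathcal M'$. The sharper output $T(\log T)^2\L$ you obtain is consistent with, and slightly stronger than, the stated $T(\log T)^3\L$.
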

\begin{proof}
 See (22) of \cite{BS1}.
\end{proof}

 \begin{prop}\label{prop:1}For an arbitrarily small $\epsilon>0$, we have
  \begin{align*} 
\int_{T^{3/4}}^{T} \zeta\left(\half+i t\right)|R(t)|^2 \Phi\left(\frac{t\log T}{T}\right) dt 
 \gg \L \ T\exp\left(\left(\frac{1}{2}-\epsilon\right)\sqrt{\frac{\log T\log_3 T}{\log_2T}}\right).
\end{align*}
\end{prop}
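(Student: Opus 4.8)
The plan is to expand $\zeta(\half+it)$ using the Dirichlet polynomial approximation $\zeta(\half+it)=\sum_{n\le T}n^{-\half+it}-\frac{T^{\half-it}}{\half-it}+O(T^{-\half})$ that is quoted in the introduction, insert it into the integral against $|R(t)|^2\Phi(t\log T/T)$, and isolate the diagonal contribution. Writing $|R(t)|^2=\sum_{m,m'\in\mathcal{M}'}r(m)r(m')(m'/m)^{it}$, the main term comes from the terms where $n\cdot m' = m$ (equivalently $nm'=m$), so that the oscillatory factor $n^{it}(m'/m)^{it}$ becomes $1$ and the remaining $\int \Phi(t\log T/T)\,dt$ contributes a factor $\asymp T/\log T$. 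This is the standard resonance-method computation, and the diagonal sum becomes essentially $\sum_{m,m',n:\,nm'=m}\frac{r(m)r(m')}{\sqrt n}$, which by the Bondarenko–Seip choice of $r(m)$ and $f$ is bounded below by a constant times $\L\cdot T\cdot\exp\big((\half-\epsilon)\sqrt{\log T\log_3 T/\log_2 T}\big)$ — in \cite{BS1} exactly this kind of lower bound is extracted, with the key point being that $\sum_n f(n)^2\prod_p(1+f(p)/\sqrt p)$ type Euler products generate the $\exp$-term, and the restriction to $\mathcal{M}$ (removing the sets $M_k$) costs only a factor absorbed into the $\epsilon$.

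The steps, in order, would be: (i) substitute the approximate functional equation and discard the $-T^{\half-it}/(\half-it)$ term and the $O(T^{-\half})$ term, checking via Lemma~\ref{lem:int_R} and the trivial bound $\sum r(m)^2\ll(\log T)^2\L$ that their contribution to the integral is $O(T^{3/4+\epsilon}\L)$ or smaller, hence negligible against the claimed main term; (ii) interchange sum and integral and split into diagonal ($nm'=m$) and off-diagonal ($nm'\neq m$) parts; (iii) for the off-diagonal terms use $\big|\int_{T^{3/4}}^T (nm'/m)^{it}\Phi(t\log T/T)\,dt\big|\ll (T/\log T)\exp(-c(T/\log T)^2\log^2(nm'/m))$ (Gaussian decay of $\widehat\Phi$), so that the off-diagonal sum is bounded by the near-diagonal range $|\log(nm'/m)|\ll (\log T)/T$, which is precisely why $r(m_j)$ was defined with the slightly widened window $[(1-T^{-1})^{j-1},(1+T^{-1})^{j+2}]$ — this makes the near-diagonal off-diagonal contribution fold into $\L$ with an acceptable power of $\log T$; (iv) lower-bound the clean diagonal term by the Euler-product computation from \cite{BS1}.

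The main obstacle I expect is step (iii): controlling the off-diagonal terms where $nm'$ is merely close to $m$ rather than equal. Because $n$ ranges up to $T$ and the $m_j$ are spaced only by factors $(1+T^{-1})$, there can be many triples $(n,m,m')$ with $nm'/m$ within $1+O(1/T)$ of $1$, and one must show these contribute at most $O(\L\,T\,\mathrm{poly}(\log T))$, i.e. genuinely smaller than the main term which carries the $\exp(\cdot)$ factor. This is handled in \cite{BS1} via the definition of $r(m_j)$ (which pre-sums $f(n)^2$ over the widened window, so the near-diagonal mass is already accounted for) together with the bound $|\mathcal{M}'|\le N=[T^{1/4}]$ from Lemma~\ref{lem:size_M}; I would follow that argument, replacing their range $0\le t\le T$ with $T^{3/4}\le t\le T$ — the lower cutoff $T^{3/4}$ only improves matters since it shrinks the domain, and the Gaussian weight $\Phi(t\log T/T)$ already concentrates $t$ near $T/\log T \ll T^{3/4}$... wait, in fact here one wants mass on $[T^{3/4},T]$, so one checks that $\int_{T^{3/4}}^T\Phi(t\log T/T)\,dt\asymp T/\log T$ still, which holds because $\Phi$ decays fast enough that the bulk near $t\asymp T/\log T$ lies comfortably above $T^{3/4}$ for large $T$. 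Once these estimates are in place, combining (i)–(iv) yields the proposition.
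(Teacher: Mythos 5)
Your sketch is essentially the Bondarenko--Seip argument \cite{BS1}, which is exactly what the paper invokes --- the paper's entire proof of this proposition is the single line ``See (25) of \cite{BS1}.'' Your outline correctly unpacks that reference (approximate functional equation, diagonal extraction, Gaussian off-diagonal control via the widened window in $r(m_j)$, and the Euler-product lower bound for the diagonal sum, with $N=[T^{1/4}]$ producing the factor $\half$ in the exponent), aside from a harmless sign/labeling slip: since $\zeta\left(\half+it\right)\approx\sum_n n^{-\half-it}$ the oscillatory factor is $n^{-it}(m'/m)^{it}$ and the diagonal is $m'=nm$, which is the same set of triples as your $nm'=m$ after swapping the roles of $m$ and $m'$.
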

\begin{proof}
See (25) of \cite{BS1}. 
\end{proof}

\section{Resonator and Hardy's Function}\label{sec:hardy}
Recall that Hardy's function $Z(t)$ has the following approximation formula ((2.3) \cite{iv1}):
\begin{align}\label{eq:z}
\notag
Z(t)&=2\sum_{n\leq \sqrt{t/2\pi}}\frac{1}{\sqrt n}\cos\left(F_n(t)\right) +O\left(t^{-1/4}\right) \\
&=2\Re\left(\sum_{n\leq \sqrt{t/2\pi}}\frac{1}{\sqrt n}\exp\left(iF_n(t)\right)\right)+O\left(t^{-1/4}\right), 
\end{align}
where
\begin{align*}
 F_n(t)=t\log \frac{\sqrt{t/2\pi}}{n}-\frac{t}{n}-\frac{\pi}{8}.
\end{align*}
We also need the second derivative test to estimate certain integrals involving $Z(t)$: 
\begin{lem}[Lemma~2.3 of \cite{iv1}]\label{lem:sec_der}
 Let $F(x)$ be a real twice differentiable function for $a\leq x\leq b$ such that $|F^{''}(x)|\geq m \ (>0)$. Let $G(x)$ be a positive monotonic function such that $|G(x)|\leq G$
 for $x\in [a, b]$. Then
 \[\int_a^bG(x)\exp(iF(x)) dx\leq \frac{8 G}{\sqrt m}.\]
\end{lem}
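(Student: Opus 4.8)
The plan is to deduce this from the two standard van der Corput estimates, carried out in two stages: first strip off the amplitude $G$, then treat the resulting constant-amplitude integral by dissecting the range of $F'$.

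\emph{Removing $G$.} Since $G$ is positive and monotonic, I would integrate by parts in the Stieltjes sense, so that the oscillation is absorbed into an antiderivative of $\exp(iF)$ and $G$ enters only through its (signed) variation. Concretely, if $G$ is non-decreasing I would set $J(x):=\int_x^{b}\exp(iF(u))\,du$, so that $J(b)=0$ and
\[
\int_a^{b}G(x)\exp(iF(x))\,dx = G(a)J(a)+\int_a^{b}J(x)\,dG(x),
\]
while if $G$ is non-increasing I would instead use $\Psi(x):=\int_a^{x}\exp(iF(u))\,du$, so that $\Psi(a)=0$ and
\[
\int_a^{b}G(x)\exp(iF(x))\,dx = G(b)\Psi(b)-\int_a^{b}\Psi(x)\,dG(x).
\]
In both cases $dG$ has constant sign with total mass $|G(b)-G(a)|$, and---crucially---the hypothesis $|F''|\ge m$ is inherited by every subinterval of $[a,b]$. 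Hence, granting the constant-amplitude bound $\bigl|\int_c^{d}\exp(iF)\bigr|\le 8m^{-1/2}$ for every $[c,d]\subseteq[a,b]$, both identities telescope to
\[
\Bigl|\int_a^{b}G(x)\exp(iF(x))\,dx\Bigr|\le 8m^{-1/2}\max\!\bigl(G(a),G(b)\bigr)\le 8Gm^{-1/2}.
\]

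\emph{The constant-amplitude case.} Replacing $F$ by $-F$ and conjugating if necessary, I may assume $F''\ge m$ throughout, so $F'$ is strictly increasing. For a parameter $\delta>0$ the set $\{x\in[a,b]:|F'(x)|<m\delta\}$ is a subinterval $[x_1,x_2]$, and $m(x_2-x_1)\le F'(x_2)-F'(x_1)\le 2m\delta$ shows its length is at most $2\delta$; there I bound $\int\exp(iF)$ trivially by $2\delta$. On each complementary piece $[a,x_1]$ and $[x_2,b]$ one has $|F'|\ge m\delta$ with $F'$ monotonic, so the first derivative test (integrate by parts against $d(\exp(iF))$, using that $1/F'$ is monotonic with $|1/F'|\le 1/(m\delta)$) bounds the contribution by $4/(m\delta)$ on each. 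Summing, $\bigl|\int_a^{b}\exp(iF)\bigr|\le 2\delta+8/(m\delta)$, and taking $\delta=2m^{-1/2}$ makes the right-hand side exactly $8m^{-1/2}$, which is what Stage~1 requires.

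The argument is entirely classical, so there is no real obstacle; the only point requiring a little care is the bookkeeping of the absolute constant---peeling $G$ off at the correct endpoint in Stage~1 so that the surviving boundary term together with the total variation telescopes precisely to $\max(G(a),G(b))$, and balancing $\delta$ in Stage~2. For the purposes of this paper the value $8$ is immaterial anyway: any fixed constant would serve equally well in all later applications of the lemma.
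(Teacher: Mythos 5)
The paper does not prove this lemma; it simply cites Lemma~2.3 of Ivi\'c's book on Hardy's $Z$-function. Your argument --- peeling off the monotone weight $G$ by integration by parts (equivalently, the second mean value theorem) and then handling the constant-amplitude integral by splitting at the threshold $|F'|=m\delta$ and balancing the trivial bound $2\delta$ against the first-derivative-test bound $8/(m\delta)$ at $\delta=2m^{-1/2}$ --- is precisely the classical van der Corput proof found in that reference (and in Titchmarsh), and it is correct, with the arithmetic $2\delta+8/(m\delta)=8m^{-1/2}$ recovering the stated constant.
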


Using the above approximation formula for $Z(t)$ and the second derivative test, we prove the following proposition.
\begin{prop}\label{prop:2} Let $R(t)$ be defined as in Section~\ref{sec:resonator}. Then for large $T$,
 \begin{align*}
 \int_{T^{3/4}}^{T} Z(t) |R(t)|^2\Phi\left(\frac{t\log T}{T}\right) dt
 \ll \L \ T(\log T)^2. 
 \end{align*}
\end{prop}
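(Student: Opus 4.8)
The plan is to expand the square $|R(t)|^2 = \sum_{m,n\in\mathcal{M}'} r(m)r(n)\,(m/n)^{it}$ and insert the Riemann--Siegel approximation (\ref{eq:z}) for $Z(t)$, reducing the integral to a double sum over $\mathcal{M}'$ and a sum over the Dirichlet-polynomial index $k\le\sqrt{t/2\pi}$ of oscillatory integrals
\[
\int_{T^{3/4}}^{T} \frac{1}{\sqrt{k}} \exp\!\big(iF_k(t)\big)\,(m/n)^{it}\,\Phi\!\left(\tfrac{t\log T}{T}\right) dt,
\]
together with their complex conjugates, plus a harmless contribution from the $O(t^{-1/4})$ error term (which, after multiplying by $|R(t)|^2\Phi$ and integrating, is absorbed using Lemma~\ref{lem:int_R} and is far smaller than the claimed bound). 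The phase of the $k$-th integral is $\psi(t) = F_k(t) + t\log(m/n)$, and the point is that $\psi''(t) = F_k''(t) = -\frac{1}{t} + O(t^{-2})$ does not depend on $m,n$, so $|\psi''(t)|\asymp 1/t \gg 1/T$ throughout the range. Hence Lemma~\ref{lem:sec_der} applies with $m \asymp 1/T$ and $G\ll 1$, giving each oscillatory integral a bound of $O(\sqrt{T})$, uniformly in $k$, $m$, $n$.

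Next I would sum up these contributions. Summing the bound $O(\sqrt{T}/\sqrt{k})$ over $k\le \sqrt{T/2\pi}$ gives $O(\sqrt{T}\cdot (T/2\pi)^{1/4}) = O(T^{3/4})$ from the Dirichlet polynomial, and then summing over the pairs $(m,n)\in\mathcal{M}'\times\mathcal{M}'$ weighted by $r(m)r(n)$ produces a factor $\left(\sum_{m\in\mathcal{M}'} r(m)\right)^2$. By Lemma~\ref{lem:sum_r} this is $\ll T^{1/4}(\log T)^2\,\L$. Multiplying, the total is $\ll T^{3/4}\cdot T^{1/4}(\log T)^2\,\L = T(\log T)^2\,\L$, which is exactly the claimed bound; the error term contributes only $O\big(T^{-1/4}\cdot T(\log T)^3\L\big)$ by Lemma~\ref{lem:int_R}, which is lower order, so it is absorbed.

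The main obstacle, and the step requiring genuine care, is the justification that the second derivative test applies uniformly and with the right constant: one must check that $F_k''(t) = -t^{-1} + O(t^{-2})$ (this follows from $F_k(t) = t\log\sqrt{t/2\pi} - t\log k - t/k - \pi/8$, so $F_k'(t) = \log\sqrt{t/2\pi} - \log k - 1/k + \tfrac{1}{2}$ and $F_k''(t) = \tfrac{1}{2t}$ — so in fact $|\psi''(t)| = \tfrac{1}{2t} \ge \tfrac{1}{2T}$ on the whole interval), and that adding the linear term $t\log(m/n)$ from $|R(t)|^2$ does not disturb the second derivative at all. One must also confirm $G(t) = \Phi(t\log T/T)$ can be taken monotonic on $[T^{3/4},T]$ — it is, being $e^{-(t\log T/T)^2}$, decreasing there — and that the $k^{-1/2}$ weight, being independent of $t$, may be pulled outside. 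A secondary point is handling the range $t\in[T^{3/4},T]$ for the truncation $k\le\sqrt{t/2\pi}$, which varies with $t$; one can either split the integral at the dyadic points where the truncation length changes, or simply note that extending each $k$-sum to $k\le\sqrt{T/2\pi}$ and restricting the $t$-integration accordingly only costs the same $O(\sqrt{T})$ per term. Once these uniformity checks are in place, the estimate is a routine combination of Lemma~\ref{lem:sec_der}, Lemma~\ref{lem:sum_r}, and Lemma~\ref{lem:int_R}.
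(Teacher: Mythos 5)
Your proposal is correct and follows essentially the same route as the paper: expand $|R(t)|^2$, insert the Riemann--Siegel approximation for $Z(t)$, apply the second derivative test (noting $F_k''(t)=\tfrac{1}{2t}\gg 1/T$ is unaffected by the linear term $t\log(m/n)$) to bound each oscillatory integral by $O(\sqrt{T})$, then sum over $k\le\sqrt{T/2\pi}$ to get $T^{3/4}$ and over $(m,n)$ via Lemma~\ref{lem:sum_r} to get $T^{1/4}(\log T)^2\L$, yielding $\L\,T(\log T)^2$. The only differences are cosmetic: you spell out the uniformity checks and the treatment of the $O(t^{-1/4})$ error term (which the paper leaves implicit), and your stated exponent $T^{-1/4}$ on that error should really be $T^{-3/16}$ since $t\ge T^{3/4}$ rather than $t\ge T$, though this does not affect the conclusion.
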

\begin{proof}
We plug in the formulas for $Z(t)$ and $R(t)$ from (\ref{eq:res}) and (\ref{eq:z}) respectively in the above integral, and then exchange the sums and the integral to get
 \begin{align*}
&J:=\int_{T^{3/4}}^{T} Z(t) |R(t)|^2\Phi\left(\frac{t\log T}{T}\right) dt \\
&\ll\quad\sum_{m, n\in \mathcal{M}'}r(m)r(n) 
\sum_{k\leq \sqrt{\frac{T}{2\pi}}}\frac{1}{\sqrt k}\left|\int_{\max{(2\pi k^2, \ T^{3/4})}}^{T}\exp\left(iF_k(t)+it\log (m/n)\right)\Phi\left(\frac{t\log T}{T}\right)dt\right|.
\end{align*}

To apply the second derivative test, observe that
\[\frac{d^2}{dt^2}(F_k(t)+it\log (m/n))\gg \frac{1}{t}\gg \frac{1}{T},\]
when $T^{3/4}\leq t\leq T$. So by Lemma~\ref{lem:sum_r} and Lemma~\ref{lem:sec_der}, we have
\begin{align*}
 J &\ll \sqrt{T}\sum_{m, n\in \mathcal{M}'}r(m)r(n)\sum_{k\leq \sqrt{\frac{T}{2\pi}}}\frac{1}{\sqrt k}\\
 &\ll T^{3/4}\left(\sum_{m\in \mathcal{M}'}r(m)\right)^2 \ll \L \ T(\log T)^2. 
\end{align*}
\end{proof}

\section{Proof of Theorem~\ref{thm:main1}}\label{sec:main_thm}
We prove Theorem~\ref{thm:main1} by comparing Proposition~\ref{prop:1} and Proposition~\ref{prop:2}.
We will proceed by the method of contradiction. So assume that
 \begin{equation}\label{asump}Z^-(t)\leq C_1\exp\left(\left(\frac{1}{2}-\epsilon\right)\sqrt{\frac{\log T\log_3 T}{\log_2T}}\right),\end{equation}
 for all $t\in[T^{3/4}, T]$ and for some $C_1>0$. Let 
\[J_1:=\int_{T^{3/4}}^{T} \left|\zeta\left(\half+i t\right)\right||R(t)|^2 \Phi\left(\frac{t\log T}{T}\right) dt. \]
Then by Proposition~\ref{prop:1}, for any $\epsilon>0$,
\begin{equation}\label{j1}
 J_1\gg \L T\exp\left(\left(\frac{1}{2}-\frac{\epsilon}{2}\right)\sqrt{\frac{\log T\log_3 T}{\log_2T}}\right).
\end{equation}
Define 
 \[J_2=J_1 - 2\int_{T^{3/4}}^{T} Z^-( t)|R(t)|^2 \Phi\left(\frac{t\log T}{T}\right) dt.\]
 We will bound $J_2$ from below using assumption (\ref{asump}) and Lemma~\ref{lem:int_R} as follows
 \begin{equation}\label{J2}
 J_2\gg \L T\exp\left(\left(\frac{1}{2}-\frac{\epsilon}{2}\right)\sqrt{\frac{\log T\log_3 T}{\log_2T}}\right),
 \end{equation}
 as 
\begin{align*}
&\int_{T^{3/4}}^{T} Z^-( t)|R(t)|^2 \Phi\left(\frac{t\log T}{T}\right) dt\\
&\ll \exp\left(\left(\frac{1}{2}-\epsilon\right)\sqrt{\frac{\log T\log_3 T}{\log_2T}}\right)\int_{T^{3/4}}^{T} |R(t)|^2 \Phi\left(\frac{t\log T}{T}\right) dt\\
&\ll \L T \exp\left(\left(\frac{1}{2}-\frac{2\epsilon}{3}\right)\sqrt{\frac{\log T\log_3 T}{\log_2T}}\right).
\end{align*}

Since $|Z(t)|=Z(t)+2Z^-(t)$, 
\[J_2\leq  \int_{T^{3/4}}^{T} Z( t)|R(t)|^2 \Phi\left(\frac{t\log T}{T}\right) dt.\] 
From Proposition~\ref{prop:2} we  get
\begin{align*}
J_2\ll \L \ T(\log T)^2,
\end{align*}
which contradicts  (\ref{J2}). So our assumption (\ref{asump}) for $Z^{-}(t)$ is false. This proves (B) of Theorem~\ref{thm:main1}, and the proof of (A) is similar.

\end{document}